\title{On the bicanonical map of algebraic threefolds of general type}
\date{\today}
\dedicatory{Dedicated to the memory of professor Zhihua Chen}
\author{Meng Chen}
\address{\rm School of Mathematical Sciences, Fudan University, Shanghai 200433, China}
\email{mchen@fudan.edu.cn}
\author{Chen Jiang}
\address{\rm Shanghai Center for Mathematical Sciences \& School of Mathematical Sciences, Fudan University Shanghai 200438, China}
\email{chenjiang@fudan.edu.cn}
\author{Jianshi Yan}
\address{\rm Department of Mathematics, Northeastern University, Shenyang 110819, China}
\email{yanjs@mail.neu.edu.cn}
\thanks{This project is supported by was supported by NSFC for Innovative Research Groups \#12121001, National Key Research and Development Program of China \#2023YFA1010600, and National Key Research and Development Program of China \#2020YFA0713200. The first author is supported by NSFC Program (\#12071078). The third author is supported by the Fundamental Research Funds for the Central Universities (\#2205012). The first two authors are members of LMNS, Fudan University}
\newcommand{\bQ}{{\mathbb Q}}
\newcommand{\bP}{{\mathbb P}}
\newcommand{\roundup}[1]{\lceil{#1}\rceil}
\newcommand{\rounddown}[1]{\lfloor{#1}\rfloor}
\newcommand\OO{{\mathcal{O}}}
\newcommand{\Mov}{\operatorname{Mov}}
\newcommand{\lct}{\text{\rm lct}}
\newcommand{\glct}{\text{\rm glct}}
\newtheorem{thm}{Theorem}[section]
\newtheorem{lem}[thm]{Lemma}
\newtheorem{prop}[thm]{Proposition}
\theoremstyle{definition}
\newtheorem{defn}[thm]{Definition}
\newtheorem{question}[thm]{Question}
\newtheorem{notation}[thm]{Notation}
\theoremstyle{remark}
\begin{document}
\begin{abstract} We answer an open problem of the first author and Zhang (see Open Problem 6.4 (3) in Math. Z. 258 (2008), 565--585) and prove that, for any nonsingular projective $3$-fold of general type with the geometric genus greater than 201,
the bicanonical system is not composed of any pencil of surfaces.
\end{abstract}
\maketitle

\pagestyle{myheadings}
\markboth{\hfill M. Chen, C. Jiang, J. Yan\hfill}{\hfill \hfill}
\numberwithin{equation}{section}


\section{Introduction}

In birational geometry, studying the behavior of pluricanonical maps is usually a very important step towards the classification of varieties. Given a nonsingular projective variety $V$ of general type, the geometry induced from the $m$-canonical system $|mK_V|$ is connected with various aspects, such as the birationality of $\Phi_{|mK_V|}$, the value of $\dim \overline{\Phi_{|mK_V|}(V)}$, the value of $P_{m}(V)$ for any positive integer $m$ and so on.

This note concentrates on whether the bicanonical system $|2K_V|$ is composed of a pencil or not, where $V$ is a $3$-fold of general type. In fact, the first author and Zhang raised the following open problem in 2008:

\begin{question}\label{op} (\cite[Open Problem 6.4 (3)]{CZ08}) Is there a constant $N>0$ such that the bicanonical system $|2K_V|$ is not composed of any pencil of surfaces for every nonsingular projective $3$-fold $V$ of general type with $p_g(V)\geq N$?
\end{question}

As far as we know, there were two relevant results by the first author:
\begin{itemize}
\item It is proved in \cite{CAM98} that, for any smooth minimal $3$-fold $X$ of general type, $|mK_X|$ is not composed of a pencil for all $m\geq 3$.

\item It is proved in \cite{OSAKA} that, for any Gorenstein minimal $3$-fold $X$ of general type, if $|2K_X|$ is composed of a pencil of surfaces $\{F\}$, then one must have $p_g(F)=1$ and $K_{F_0}^2\leq 3$ where $F_0$ is the minimal model of $F$.
\end{itemize}

The main purpose of this paper is to give a positive answer to Question \ref{op} and prove the following theorem.

\begin{thm}\label{thm1}
 Let $V$ be a smooth projective $3$-fold of general type. If $p_g(V)\geq 202$, then $|2K_V|$ is not composed of any pencil of surfaces, that is, $\dim \overline{\Phi_{|2K_V|}(V)}\geq 2$.\end{thm}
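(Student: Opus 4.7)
The plan is to argue by contradiction: assume $|2K_V|$ is composed of a pencil of surfaces. Taking a smooth birational model $\pi\colon X \to V$ where the moving part $M$ of $|\pi^*(2K_V)|$ is base-point-free, and applying Stein factorization to $\Phi_{|M|}$, I obtain a fibration $f\colon X \to \Gamma$ onto a smooth projective curve, with general fibre $F$ a smooth connected surface. Writing $\pi^*(2K_V) = aF + D$ with $a \geq 1$ the pencil degree and $D \geq 0$ the fixed part (absorbing the $\pi$-exceptional contribution), the dimension bound $\dim |M| = P_2(V) - 1 \geq p_g(V) - 1 \geq 201$ combined with Riemann--Roch on $\Gamma$ forces $a \geq P_2(V) - 1 + g(\Gamma) \geq 201$.

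The heart of the argument is to constrain the invariants of the general fibre $F$. By adjunction $K_F = K_X|_F$ (since $F^2 = 0$ numerically), and restricting the relation $2K_X - aF \sim D$ to $F$ yields $2K_F \sim D|_F$. I then aim for an analogue of the Gorenstein result of \cite{OSAKA}: to pin down $p_g(F)$, $q(F)$, and $K_{F_0}^2$ (on the minimal model $F_0$) into a short, explicit list --- one expects in particular $p_g(F) \leq 1$ and $K_{F_0}^2$ bounded by a small absolute constant. The tools are restriction sequences for $|K_X|$ and $|2K_X|$ on $F$, combined with the bigness $K_X^3 > 0$, which forces the horizontal part of $D$ to contribute positively to $K_F$.

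The final step is a counting argument via iterated restriction. Using the sequences
\[
0 \to \OO_X(K_X - (k+1)F) \to \OO_X(K_X - kF) \to \OO_F\bigl((K_X - kF)|_F\bigr) \to 0
\]
for $k = 0, 1, \ldots$, I bound $p_g(V) = h^0(X, K_X)$ from above by essentially $\tfrac{a}{2}\cdot p_g(F)$ plus lower-order corrections involving $g(\Gamma)$ and the geometry of $D$. Combining this with the fibre constraints from the previous step and the lower bound $a \geq 201$ produces an inequality of the form $p_g(V) \leq 201$, contradicting the hypothesis.

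The principal obstacle is the fibre-invariant step. Unlike the situation in \cite{OSAKA}, $V$ is only assumed smooth --- not Gorenstein, nor minimal --- so there is no Gorenstein minimal model to lean on, and the fixed divisor $D$ may have a nontrivial horizontal component behaving subtly under restriction to $F$. Carefully separating the horizontal and vertical parts of $D$ relative to $f$, and relating $D|_F$ to the intrinsic invariants of $F_0$, is precisely what controls the sharp constant $202$; cruder bookkeeping would produce a much larger threshold.
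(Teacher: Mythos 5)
Your proposal is a programme rather than a proof, and the two load-bearing steps are both problematic. First, the ``fibre-invariant step'' --- pinning down $p_g(F)$ and $K_{F_0}^2$ for the general fibre of the bicanonical pencil --- is exactly the part you acknowledge you cannot do: the argument of \cite{OSAKA} relies essentially on the Gorenstein minimal model, and no substitute is offered. Second, and more fundamentally, the concluding count cannot produce a contradiction even if that step were granted. From the sequences $0 \to \OO_X(K_X-(k+1)F) \to \OO_X(K_X-kF) \to \OO_F(K_F) \to 0$ one gets $p_g(V) \leq (N+1)p_g(F)$ with $N \approx a/2$; if $p_g(F)=0$ this forces $p_g(V)=0$ and there is nothing to discuss, so the only relevant case is $p_g(F)\geq 1$, where the bound reads $p_g(V) \lesssim \frac{a}{2}+(\text{corrections})$. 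But $a$ is bounded \emph{below} by $P_2(V)-1 \geq 2p_g(V)-2$, so this upper bound grows at least as fast as $p_g(V)$ itself: the inequality points the wrong way and no bound of the form $p_g(V)\leq 201$ can emerge. Indeed, even in the Gorenstein case \cite{OSAKA} only constrains the pair $(p_g(F), K_{F_0}^2)$ and does not exclude the pencil; ruling it out needs a new ingredient that your sketch does not supply, and nothing in it explains the specific threshold $202$.

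The paper's route is quite different and worth internalizing. One replaces $V$ by a minimal model $X$ and works with the pencil defined by $|K_X|$ (which is also composed of a pencil if $|2K_X|$ is), writing $\pi^*K_X \equiv aF+Z$ with $a\geq p_g(X)-1$. Kawamata--Viehweg vanishing gives a surjection from a subspace of $H^0(X,2K_X)$ onto $H^0(F, K_F+\roundup{\pi^*K_X|_F-\frac{1}{a}Z|_F})$, so it suffices to show this last space has dimension $\geq 2$; then $|2K_X|$ moves along the canonical fibres and cannot be composed of a pencil. This is checked in three cases: trivially if $p_g(F)\geq 2$; by a birationality criterion for $K_F+\roundup{L}$ if $K_{F_0}^2\geq 9$; and, if $p_g(F)\leq 1$ and $K_{F_0}^2\leq 8$, by the bound $\glct(F_0)\geq \frac{1}{200}$ (obtained via Koll\'ar's method from $h^0(2K_{F_0'})\leq 10$) together with the basepoint-freeness result of \cite{Noether, Noether_Add}, which is precisely where the hypothesis $p_g \geq 202 > \frac{1}{\glct(F_0)}+1$ enters.
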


\section{Preliminaries}

Throughout this paper, we work over any algebraically closed field of characteristic $0$. We adopt the standard notation and definitions in \cite{KMM} and \cite{K-M}, and will freely use them.

A projective variety $X$ is said to be {\it of general type} if, for some resolution $W\rightarrow X$, $K_W$ is big. A projective variety $X$ is said to be {\it minimal} if $X$ has at worst $\bQ$-factorial terminal singularities and $K_X$ is nef.

\subsection{Rational maps defined by linear systems of Weil divisors}\label{b setting}\

Let $X$ be a normal projective $3$-fold.
Consider a $\bQ$-Cartier Weil divisor $D$ on $X$ with $h^0(X, D)\geq 2$. We study the rational map $\Phi_{|D|}$ defined by $|D|$, say
$$X\overset{\Phi_{|D|}}{\dashrightarrow} \bP^{h^0(D)-1}$$ which is
not necessarily well-defined everywhere. By Hironaka's desingularization theorem, we can take a projective birational morphism $\pi: W\rightarrow X$ such that:
\begin{itemize}
\item [(i)] $W$ is smooth projective;
\item [(ii)] the movable part $|M|$ of the linear system
$|\rounddown{\pi^*(D)}|$ is basepoint-free and, consequently,
the rational map $\gamma=\Phi_{|D|}\circ \pi$ is a projective morphism;
\item [(iii)] the support of the
union of $\pi_*^{-1}(D)$ and the exceptional divisors of $\pi$ is of
simple normal crossings.
\end{itemize}
Let $W\overset{f}\longrightarrow \Gamma\overset{s}\longrightarrow \Sigma$
be the Stein factorization of $\gamma$ with $\Sigma=\gamma(W)\subseteq
\bP^{h^0(D)-1}$. We have the following commutative
diagram:
$$\xymatrix@=4.5em{
W\ar[d]_\pi \ar[dr]^{\gamma} \ar[r]^f& \Gamma\ar[d]^s\\
X \ar@{-->}[r]^{\Phi_{|D|}} & \Sigma}
$$



If $\dim(\Gamma)=1$, then $\Gamma$ is a smooth curve and
a general fiber $F$ of $f$ is a smooth projective surface
by Bertini's theorem. In this case, we say that
$|D|$ {\it is composed of a pencil of surfaces}. 

\subsection{Global log canonical thresholds}\

We recall the concept of global log canonical thresholds for minimal  varieties of general type from \cite{Noether}.

Let $X$ be a normal variety with lc singularities and $D\geq 0$ a $\bQ$-Cartier $\bQ$-divisor. The {\it log canonical threshold} of $D$ with respect to $X$ is defined by
$$\lct(X; D) = \sup\{t\geq 0 \mid (X, tD) \text{ is lc}\}.$$

\begin{defn}
Let $Y$ be a normal projective variety with at worst klt singularities such that $K_Y$ is nef and big. The {\it global log canonical threshold} ({\it \glct}, for short) of $Y$ is defined as the following:
\begin{align*}
\glct(Y){}&=\inf\{\lct(Y; D)\mid 0\leq D\sim_\bQ K_Y\}\\
{}&=\sup\{t\geq 0\mid (Y, tD) \text{ is lc for all }0\leq D\sim_\bQ K_Y\}.
\end{align*}
\end{defn}

\section{GLCT of minimal surfaces of general type with $p_g\leq 1$ and $c_1^2\leq 8$}

In this section, we give an estimate for glct of minimal surfaces of general type with $p_g\leq 1$ and $c_1^2\leq 8$.
We will use Koll\'ar's method and the notation in \cite[Appendix]{Noether}.

First we recall a local invariant defined by Koll\'ar, called
 the {\it minimal multiplier codimension}
$$
\textrm{mcd}(c):=\min_{G, \Delta} \Big\{\dim\Big(\big(\mathbb{C}[x,y]/{\mathcal J}^+(\Delta)\big)^G\Big)\Big\},
$$
where  $G$ runs though all finite subgroups of $\textrm{SL}_2(\mathbb{C})$,   $\Delta$ runs through all $G$-invariant divisors such that $\lct_0(\mathbb{C}^2; \Delta)<c$ (where $\lct_0$ means the log canonical threshold in a neighborhood of the origin) and
we use the upper multiplier ideal
${\mathcal J}^+(\Delta):= {\mathcal J}((1-\epsilon)\Delta)$
for $0<\epsilon\ll 1$ \cite[9.2.1]{Positivity2}. 

\begin{prop}[{cf. \cite[Proposition~A.3]{Noether}}]\label{prop:A3}
 Let $S$ be a projective surface with Du~Val singularities and $H$ an ample Cartier divisor. Let $\Delta$ be an effective $\mathbb{Q}$-divisor such that $\Delta\equiv H$. Then
$$h^0(S, \OO_S(K_S+H))\geq \operatorname{mcd}(\lct(S;\frac{1}{H^2}\Delta)).$$
\end{prop}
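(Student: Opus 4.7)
The strategy follows Koll\'ar's classical approach, as recalled in \cite[Appendix]{Noether}: exploit the log canonical threshold $c := \lct(S;\tfrac{1}{H^2}\Delta)$ to extract many sections of $K_S+H$ via Nadel-type vanishing applied to the multiplier ideal sheaf. By construction $\bigl(S,\tfrac{c}{H^2}\Delta\bigr)$ is lc but not klt, so its non-klt locus is non-empty.

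First I would run the tie-breaking procedure. Since $H$ is ample and Cartier, I can add a small general effective $\bQ$-divisor $\sim_{\bQ} \eps H$ to the boundary and rescale to obtain a new boundary $B \equiv (\tfrac{c}{H^2}+\eps)H$ such that $(S,B)$ is lc, still not klt, and has a \emph{unique} minimal non-klt center $W$. Because $S$ is a normal surface with Du~Val (hence canonical) singularities, $W$ is either an isolated point or an irreducible curve, and a further perturbation isolates $W$ at a single point whenever required.

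Next comes an application of Kawamata--Viehweg/Nadel vanishing. Since
\begin{equation*}
K_S + H - B \equiv \bigl(1 - \tfrac{c}{H^2} - \eps\bigr)H
\end{equation*}
is nef and big for $\eps$ small enough in the nontrivial range $c < H^2$, one obtains
\begin{equation*}
H^1\!\bigl(S,\,\mathcal{J}(S,B)\otimes \OO_S(K_S+H)\bigr)=0,
\end{equation*}
and hence, via the ideal sequence of the multiplier ideal, a surjection
\begin{equation*}
H^0\bigl(S,\,\OO_S(K_S+H)\bigr)\twoheadrightarrow H^0\bigl(Z_B,\,\OO_{Z_B}(K_S+H)\bigr),
\end{equation*}
where $Z_B$ denotes the log canonical subscheme cut out by $\mathcal{J}(S,B)$.

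The final and principal step is to bound $h^0\bigl(Z_B,\OO_{Z_B}(K_S+H)\bigr)$ from below by $\operatorname{mcd}(c)$. This is where the combinatorial function mcd from \cite[Appendix]{Noether} enters: it is designed to count, in terms of $c$, how many independent jets can be separated on the minimal lc center by iterating tie-breaking, restriction, and Kawamata subadjunction. I expect this bookkeeping to be the main obstacle---translating the analytic invariant $c$ into a concrete lower bound on sections over $Z_B$, and in particular handling carefully the case in which $W$ passes through a Du~Val singularity of $S$, where local subadjunction carries correction terms that must be tracked to preserve the right multiplicity count. The tie-breaking and vanishing steps themselves are standard.
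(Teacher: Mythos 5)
Your proposal does not follow the paper's route and, as written, has a genuine gap at its central step. The paper does not reprove Koll\'ar's result: it reduces the statement to \cite[Proposition~A.3]{Noether} by checking the one hypothesis that is not explicitly assumed here, namely that $\rounddown{\frac{1-\eps}{H^2}\Delta}=0$. This is automatic because each irreducible component $C$ of $\Delta$ with coefficient $d_C$ satisfies $d_C\leq d_C(C\cdot H)\leq (\Delta\cdot H)=H^2$ (using that $H$ is ample and Cartier), so $(S,\frac{1-\eps}{H^2}\Delta)$ has only isolated non-lc centers for all $0<\eps<1$; the cited proof then applies verbatim. Your proposal contains neither this observation nor the reduction, and instead attempts to rebuild the argument from scratch.

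In that rebuilt argument, the decisive inequality $h^0(Z_B,\OO_{Z_B}(K_S+H))\geq \operatorname{mcd}(c)$ is exactly the content of the proposition, and you leave it as ``bookkeeping'' that you ``expect to be the main obstacle''---i.e., it is not proved. Worse, the framework you set up cannot deliver it: a single tie-breaking at the threshold $t=c$ followed by Nadel vanishing produces a surjection onto the structure sheaf of one minimal lc center, which yields only $h^0\geq 1$ when that center is a point (and requires adjunction data you have not controlled when it is a curve). A bound that grows as $c\to 0$, which is what $\operatorname{mcd}(c)$ provides, requires running the argument over the entire increasing chain of jumping numbers of $t\mapsto \mathcal{J}(S,\frac{t}{H^2}\Delta)$ up to $t=1-\eps$ and lower-bounding the lengths of the resulting nested zero-dimensional non-lc subschemes; the function $\operatorname{mcd}$ is precisely the combinatorial minimum of those lengths. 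Without either carrying out that iteration or invoking \cite[Proposition~A.3]{Noether} after verifying its round-down hypothesis, the proof is incomplete.
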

\begin{proof}
This was indicated by Koll\'ar \cite[Remark~A.7.2]{Noether}. We go through the proof of \cite[Proposition~A.3]{Noether}. In fact, the coefficients of $\Delta$ are at most $(\Delta\cdot H)=H^2$. So $\lfloor{\frac{1-\epsilon}{H^2}\Delta}\rfloor=0$ and
$(S, \frac{1-\epsilon}{H^2}\Delta)$ has only isolated non-log-canonical centers for $0<\epsilon<1$. The rest of the proof is exactly the same as \cite[Proposition~A.3]{Noether}.
\end{proof}

\begin{prop}\label{prop:glct>1/200}
 Let $S$ be a minimal surface of general type with $p_g(S)\leq 1$ and $K^2_S\leq 8$. Then $\glct(S)\geq \frac{1}{200}.$
\end{prop}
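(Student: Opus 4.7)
The plan is to argue by contradiction, passing to the canonical model and combining Proposition~\ref{prop:A3} with Riemann-Roch. Let $f\colon S\to S_c$ be the contraction to the canonical model, so that $S_c$ carries at worst Du~Val singularities, $K_{S_c}$ is ample and Cartier, and $f^*K_{S_c}=K_S$. Because $f$ is crepant with a negative-definite exceptional configuration, pullback and pushforward identify effective $\mathbb{Q}$-divisors $\mathbb{Q}$-linearly equivalent to $K_S$ with those equivalent to $K_{S_c}$, preserving log canonical thresholds. Thus $\glct(S)=\glct(S_c)$, and it suffices to prove $\glct(S_c)\geq\frac{1}{200}$.

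Suppose to the contrary that there exists $0\leq D\sim_\mathbb{Q} K_{S_c}$ with $c_0:=\lct(S_c;D)<\frac{1}{200}$. I would then apply Proposition~\ref{prop:A3} with $H=K_{S_c}$ and $\Delta=D$: since $\Delta\equiv H$ and
\[
\lct\Bigl(S_c;\tfrac{1}{K_{S_c}^2}D\Bigr)=K_{S_c}^2\cdot c_0<\frac{K_{S_c}^2}{200}\leq\frac{1}{25},
\]
the proposition produces a lower bound $h^0(S_c,2K_{S_c})\geq\operatorname{mcd}(K_{S_c}^2\cdot c_0)$ with argument strictly less than $1/25$.

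For the opposing upper bound on $h^0(2K_{S_c})$, I would invoke Kawamata-Viehweg vanishing (valid since $K_{S_c}$ is ample) and Riemann-Roch on the Du~Val surface $S_c$:
\[
h^0(2K_{S_c})=\chi(\OO_{S_c})+K_{S_c}^2=\chi(\OO_S)+K_S^2.
\]
For any smooth minimal surface of general type, Noether's formula together with Miyaoka's inequality $c_1^2\leq 3c_2$ force $\chi(\OO_S)\geq 1$, so $q(S)\leq p_g(S)\leq 1$ and $\chi(\OO_S)\leq 2$. Hence $h^0(2K_{S_c})\leq 10$.

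To conclude, I would combine these: Koll\'ar's mcd estimate (as used in the appendix of \cite{Noether}) yields, after tie-breaking reduction to an isolated non-klt center at a smooth point, a lower bound of the form $\operatorname{mcd}(c)\geq\binom{\lfloor 1/c\rfloor+1}{2}$, which for $c<1/25$ is at least $\binom{26}{2}=325$, contradicting the upper bound $10$ and hence establishing the claim. The main obstacle I anticipate is pinning down Koll\'ar's mcd function with enough precision to reach exactly the constant $\frac{1}{200}$; if the naive multiplicity bound falls short (for instance because the isolated non-klt center lies at a Du~Val point of $S_c$ rather than a smooth point), I would run the argument on the minimal resolution $S$, where multiplier-ideal colengths at smooth points admit the cleanest estimates, and then translate back via crepancy of $f$.
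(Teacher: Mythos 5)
Your proposal follows essentially the same route as the paper: pass to the canonical model, apply Proposition~\ref{prop:A3} with $H=K_{S_c}$ and $\Delta=D$, bound $h^0(2K_{S_c})\leq 10$ by Riemann--Roch and vanishing, invoke Koll\'ar's lower bound on $\operatorname{mcd}$ to force $\lct(S_c;\frac{1}{K_{S_c}^2}D)\geq \frac{1}{25}$, and rescale by $K_{S_c}^2\leq 8$ to reach $\frac{1}{200}$. The one step you flag as uncertain---the precise form of the $\operatorname{mcd}$ bound---is exactly \cite[Lemma~A.4]{Noether}, which the paper cites and which already yields the threshold $\frac{1}{25}$ from $h^0\leq 10$; your guessed binomial bound is stronger than necessary, but any bound exceeding $10$ for arguments below $\frac{1}{25}$ closes the argument.
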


\begin{proof}
Fix an effective $\bQ$-divisor $B\sim_\bQ K_S$, it suffices to show that $\lct(S; B)\geq \frac{1}{200}$.
Denote by $S'$ the canonical model of $S$ and $\tau: S \to S'$ the induced map. Consider the effective $\bQ$-divisor $\Delta:= \tau_*B\sim_\bQ K_{S'}$. Since $\tau$ is crepant, $\lct(S; B)=\lct(S'; \Delta)$.
Indeed, for any $t\geq 0$, we have 
$$
K_S+tB=\tau^*(K_{S'}+t\Delta)
$$
and hence $(K_S, tB)$ is lc if and only if 
$(K_{S'}, t\Delta)$ is lc. So we only need to bound $\lct(S';\Delta)$.

Applying Proposition~\ref{prop:A3} to $H=K_{S'}$, we get
\begin{align}
 h^0( S', \OO_{S'}(2K_{S'}))\geq \operatorname{mcd}(\lct(S';\frac{1}{K_{S'}^2}\Delta)). \label{eq:h0>mcd}
\end{align}
On the other hand, by the Riemann--Roch formula and the Kodaira vanishing theorem,
$$
h^0( {S'}, \OO_{S'}(2K_{S'}))=K_{S'}^2+\chi(S',\OO_{S'})\leq K_{S'}^2+1+p_g({S'})\leq 10.
$$
So by \eqref{eq:h0>mcd} and \cite[Lemma~A.4]{Noether},
we get
$$\lct({S'};\frac{1}{K_{S'}^2}\Delta)\geq \frac{1}{25}.$$
So
$$\lct(S';\Delta)=\frac{1}{K_{S'}^2}\lct(S;\frac{1}{K_{S'}^2}\Delta)\geq \frac{1}{200}.$$
\end{proof}

\section{The geometry of bicanonical maps}

\begin{notation}\label{notation1}
 Let $X$ be a minimal $3$-fold of general type with $p_g(X)\geq 3$. Assume that $|K_X|$ is composed of a pencil of surfaces.
Keep the notation in Subsection~\ref{b setting} with $D=K_X$.
We have the following commutative
diagram:
$$\xymatrix@=4.5em{
W\ar[d]_\pi \ar[dr]^{\gamma} \ar[r]^f& \Gamma\ar[d]^s\\
X \ar@{-->}[r]^{\Phi_{|K_X|}} & \Sigma}
$$
Then we may write
$$
\pi^*K_X\sim_{\bQ} M+Z \equiv aF+Z
$$
where
\begin{itemize}
 \item $M=\Mov|\rounddown{\pi^*K_X}|$;
 \item $Z$ is an effective $\mathbb{Q}$-divisor;
 \item $F$ is a general fiber of $f$; and
 \item $a\geq p_g(X)-1\geq 2$.
\end{itemize}
Denote by $\sigma: F\to F_0$ the contraction morphism onto the minimal model $F_0$. Taking the restriction on $F$, we have
\begin{align}
 \pi^*K_X|_F\sim Z|_F.\label{eq:K|F=Z|F}
\end{align}
Here although both sides are $\mathbb{Q}$-divisors, the linear equivalence makes sense as the difference $\pi^*K_X|_F- Z|_F$ is a principal divisor.
\end{notation}

\begin{lem}\label{lem1} Keep the setting in Notation~\ref{notation1}.
 If
 \begin{align}
 h^0(F, K_F+\roundup{\pi^*K_X|_F-\frac{1}{a}Z|_F})\geq 2,\label{eq: K+ru>=2}
 \end{align}
 then $|2K_X|$ is not composed of a pencil of surfaces.
\end{lem}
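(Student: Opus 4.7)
The plan is to use Kawamata--Viehweg vanishing to lift sections from a general fiber $F$ of $f$ up to $W$, then to argue that these lifted sections witness a two-dimensional image for $\Phi_{|2K_X|}$.

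I would first set $N := K_W + \roundup{\pi^*K_X - \frac{1}{a}Z}$ and check the hypotheses for vanishing: $\pi^*K_X - \frac{1}{a}Z \sim_\bQ M + \frac{a-1}{a}Z$ is nef (as $M$ is basepoint-free and $\frac{a-1}{a}Z$ is effective) and big (since $\pi^*K_X$ is big), with fractional part of SNC support by our choice of $\pi$, so Kawamata--Viehweg will give $H^1(W, N) = 0$. Choosing $F$ general enough to avoid the fractional support and using $F|_F \sim 0$ together with adjunction, one obtains $(N+F)|_F \sim K_F + \roundup{\pi^*K_X|_F - \frac{1}{a}Z|_F}$. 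The short exact sequence
\[0 \to \OO_W(N) \to \OO_W(N+F) \to \OO_F(K_F + \roundup{\pi^*K_X|_F - \tfrac{1}{a}Z|_F}) \to 0\]
combined with the vanishing will yield a surjection on global sections, so the hypothesis \eqref{eq: K+ru>=2} will furnish two sections $s_1, s_2 \in H^0(W, N+F)$ with linearly independent restrictions to $F$.

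The next step is to translate this back to $|2K_X|$. Using $\roundup{\pi^*K_X - \frac{1}{a}Z} = \pi^*K_X - \rounddown{\frac{1}{a}Z}$ and $K_W = \pi^*K_X + E_\pi$ for some effective $\pi$-exceptional $E_\pi$, one can rewrite $N + F = \pi^*(2K_X) + E_\pi + F - \rounddown{\frac{1}{a}Z}$; multiplying by the canonical section of $\rounddown{\frac{1}{a}Z}$ will embed $H^0(W, N+F)$ into $H^0(W, \pi^*(2K_X) + E_\pi + F) \cong H^0(X, 2K_X + F_X)$, where $F_X := \pi_*F$ is a general member of the pencil from $|K_X|$. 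The sections $s_1, s_2$ then become genuinely new sections of $|2K_X + F_X|$ whose restrictions to $F_X$ are linearly independent and which are not of the form $s \cdot s_{F_X}$ for any $s \in H^0(X, 2K_X)$.

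The hard part will be the final comparison with the hypothetical pencil structure on $|2K_X|$. If $|2K_X|$ were composed of a pencil $g \colon X \dashrightarrow C$, then sections of $|2K_X|$ pulled back from $C$ would vanish identically on $F_X$ when multiplied by $s_{F_X}$, so the jump $h^0(X, 2K_X + F_X) - h^0(X, 2K_X)$ would be controlled by how many independent new sections could arise; when $g$ coincides with $f$ the jump is exactly $1$ (from shifting the pencil degree on $C$ by one fiber), and when $g$ is transverse to $f$ the two independent restrictions of the $s_i$ to $F$ would force $g'|_F$ to be simultaneously non-constant yet factoring through the one-dimensional image of $g$, again bounding the jump by $1$. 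Either way, the two-dimensional increment produced by $s_1, s_2$ will yield a contradiction, so $|2K_X|$ cannot be composed of a pencil.
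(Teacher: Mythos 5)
Your first step (Kawamata--Viehweg plus restriction to $F$) is the right idea and is essentially the paper's, but you have twisted in the wrong direction, and this creates a genuine gap at the end. By proving $H^1(W,N)=0$ and restricting $N+F$, your lifted sections $s_1,s_2$ live in $H^0(W,N+F)$, which maps into $H^0(X,2K_X+F_X)$ rather than into $H^0(X,2K_X)$. A $2$-dimensional restricted system $|2K_X+F_X|\big|_{F_X}$ carries no direct information about $\Phi_{|2K_X|}$, and your attempt to bridge the gap via the claim that the jump $h^0(X,2K_X+F_X)-h^0(X,2K_X)$ would be at most $1$ if $|2K_X|$ were composed of a pencil is not a valid principle: that jump is controlled by $h^0$ of the restriction to $F_X$ and can be large whether or not $|2K_X|$ is composed of a pencil (indeed $|K_X|$ itself is composed of a pencil here, yet nothing bounds $h^0(K_X+F_X)-h^0(K_X)$ by $1$). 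Also, your $s_i$ are not sections of $2K_X$, so they cannot "force $g|_F$ to be non-constant." The fix is to apply vanishing one step lower: $\pi^*K_X-\frac{1}{a}Z-F\equiv(1-\frac{1}{a})\pi^*K_X$ is nef and big, so $H^1(W,K_W+\roundup{\pi^*K_X-\frac{1}{a}Z-F})=0$ and one gets a surjection
\begin{align*}
H^0(W,K_W+\roundup{\pi^*K_X-\tfrac{1}{a}Z})\twoheadrightarrow H^0(F,K_F+\roundup{\pi^*K_X|_F-\tfrac{1}{a}Z|_F}),
\end{align*}
with the source contained in $H^0(W,K_W+\roundup{\pi^*K_X})\cong H^0(X,2K_X)$ because $Z\geq 0$. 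Then $|2K_X|$ restricted to a general fiber $F$ of the \emph{canonical} pencil has dimension $\geq 1$, which is incompatible with $|2K_X|$ being composed of a pencil (that pencil would have to refine $f$, so $\Phi_{|2K_X|}$ would be constant on general $F$).

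Two smaller points. First, your justification that $\pi^*K_X-\frac{1}{a}Z=M+\frac{a-1}{a}Z$ is nef is wrong as stated: nef plus effective is not nef in general. The correct reason is $\pi^*K_X-\frac{1}{a}Z=\frac{1}{a}M+\frac{a-1}{a}\pi^*K_X$, a sum of nef divisors (and it is big since $\pi^*K_X$ is). Second, the identity $\roundup{\pi^*K_X-\frac{1}{a}Z}=\pi^*K_X-\rounddown{\frac{1}{a}Z}$ presumes $\pi^*K_X$ is integral, which fails when $K_X$ is only $\bQ$-Cartier on a terminal $\bQ$-factorial $X$; the inclusion $\roundup{\pi^*K_X-\frac{1}{a}Z}\leq\roundup{\pi^*K_X}$ avoids this issue entirely.
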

\begin{proof}
 Since
$$
\pi^*K_X-\frac{1}{a}Z-F\equiv (1-\frac{1}{a})\pi^*K_X
$$
is nef and big,
by the Kawamata--Viehweg vanishing theorem (\cite{KV,VV}), we have
$$
H^1(W, K_W+\roundup{\pi^*K_X-\frac{1}{a}Z-F})=0.
$$
So there exists a natural surjective map
\begin{align}
{}&H^0(W, K_W+\roundup{\pi^*K_X-\frac{1}{a}Z})\notag\\
\to {}& H^0(F, K_F+\roundup{\pi^*K_X-\frac{1}{a}Z-F}|_F)\label{eq:surj1}\\
={}& H^0(F, K_F+\roundup{\pi^*K_X|_F-\frac{1}{a}Z|_F}).\notag
\end{align}
Here in the last equality, the restriction and the roundup commute because 
$\pi^*K_X-\frac{1}{a}Z-F$ has simple normal crossing support. 

On the other hand,
\begin{align*}
 {}& H^0(W, K_W+\roundup{\pi^*K_X-\frac{1}{a}Z})\\
 \subseteq {}& H^0(W, K_W+\roundup{\pi^*K_X})\\
 \cong {}& H^0(X, 2K_X).
\end{align*}
So the surjectivity of \eqref{eq:surj1} implies that, for a general fiber $F$, $|2K_X||_F$ has dimension at least $1$, which implies that $|2K_X|$ is not composed of a pencil of surfaces.
\end{proof}

\begin{prop}\label{prop:K^2>=9}
Keep the setting in Notation~\ref{notation1}.
If $p_g(X)\geq 35$ and
$K_{F_0}^2\geq 9$, then 
$$h^0(F, K_F+\roundup{\pi^*K_X|_F-\frac{1}{a}Z|_F})\geq 2.$$
\end{prop}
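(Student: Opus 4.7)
My plan is to apply Kawamata--Viehweg vanishing plus Riemann--Roch on the surface $F$ to estimate $h^0(F,K_F+\lceil L\rceil)$, where $L:=\pi^*K_X|_F-\tfrac{1}{a}Z|_F$. The linear equivalence \eqref{eq:K|F=Z|F} gives $L\sim_{\bQ}(1-\tfrac{1}{a})\pi^*K_X|_F$, so $L$ is $\bQ$-linearly equivalent to a positive rational multiple of the nef divisor $\pi^*K_X|_F$; in particular $L$ is nef, and bigness follows from bigness of $\pi^*K_X|_F$ on $F$ (which in turn comes from $K_X$ being big together with $\pi^*K_X\equiv aF+Z$, once one rules out $\pi^*K_X$ being numerically proportional to $F$, which would contradict $K_X^3>0$). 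After further blowing up $W$ if necessary so that the fractional part of $L$ has SNC support, Kawamata--Viehweg vanishing yields $H^1(F,K_F+\lceil L\rceil)=0$, whence
\[
h^0(F,K_F+\lceil L\rceil)=\chi(F,K_F+\lceil L\rceil).
\]

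By Riemann--Roch on the smooth surface $F$,
\[
\chi(F,K_F+\lceil L\rceil)=\chi(\OO_F)+\tfrac{1}{2}\lceil L\rceil\cdot(K_F+\lceil L\rceil),
\]
and since $F$ is of general type, $\chi(\OO_F)=\chi(\OO_{F_0})\geq 1$. So it suffices to show $\lceil L\rceil\cdot(K_F+\lceil L\rceil)\geq 2$. The plan is to use $\lceil L\rceil\geq L$ (absorbing the small fractional-part correction via nefness of $L$) and bound from below the numerical quantity
\[
L\cdot(K_F+L)=(1-\tfrac{1}{a})\pi^*K_X|_F\cdot K_F+(1-\tfrac{1}{a})^2(\pi^*K_X|_F)^2.
\]
The two key intersection-theoretic inputs are: first, $K_F\geq \pi^*K_X|_F$ (from $K_W\geq \pi^*K_X$ by terminality of $X$, together with $F|_F\equiv 0$ for a general fiber $F$), which yields $\pi^*K_X|_F\cdot K_F\geq(\pi^*K_X|_F)^2$; and second, the Zariski decomposition $K_F=\sigma^*K_{F_0}+N$ with $N\geq 0$ (so that $\sigma^*K_{F_0}$ is the nef positive part) combined with Hodge index on $F$ applied to the two nef divisors $\pi^*K_X|_F$ and $\sigma^*K_{F_0}$:
\[
(\pi^*K_X|_F\cdot\sigma^*K_{F_0})^2\geq (\pi^*K_X|_F)^2\cdot K_{F_0}^2\geq 9(\pi^*K_X|_F)^2.
\]

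The main obstacle lies in the delicate numerical balancing. The quantity $(\pi^*K_X|_F)^2$ admits only the a priori upper bound $K_X^3/a$ and could be quite small, so neither the self-intersection term $(1-\tfrac{1}{a})^2(\pi^*K_X|_F)^2$ nor the cross term alone suffices. The hypotheses $p_g(X)\geq 35$ (forcing $a\geq 34$, so that $1-\tfrac{1}{a}\geq 33/34$ is close to $1$) and $K_{F_0}^2\geq 9$ must be used in tandem: the Hodge bound gives $\pi^*K_X|_F\cdot K_F\geq \pi^*K_X|_F\cdot\sigma^*K_{F_0}\geq 3\sqrt{(\pi^*K_X|_F)^2}$, and I expect to conclude by a careful elementary inequality showing that
\[
(1-\tfrac{1}{a})^2(\pi^*K_X|_F)^2+3(1-\tfrac{1}{a})\sqrt{(\pi^*K_X|_F)^2}\geq 2
\]
holds uniformly in the positive rational value $(\pi^*K_X|_F)^2$, so that $\chi(F,K_F+\lceil L\rceil)\geq 1+1=2$ as required.
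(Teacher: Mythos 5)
Your overall strategy (Kawamata--Viehweg vanishing plus Riemann--Roch on the surface $F$) is not the one the paper uses, and as written it has a fatal gap: the concluding inequality is false. In your scheme everything reduces to bounding $t^2:=(\pi^*K_X|_F)^2$ from below, and you yourself observe that the only a priori information is the upper bound $K_X^3/a$. The quantity $(1-\frac{1}{a})^2t^2+3(1-\frac{1}{a})t$ tends to $0$ as $t\to 0^+$, so it certainly does not exceed $2$ ``uniformly in the positive rational value'' of $t^2$; for $t^2=0.01$ it is about $0.31$. The missing ingredient is exactly the input the paper invokes at the start of its proof: by \cite[Corollary~2.3]{Noether}, $\pi^*K_X|_F-\frac{a}{a+1}\sigma^*K_{F_0}$ is $\bQ$-effective, whence by nefness $(\pi^*K_X|_F)^2\geq (\frac{a}{a+1})^2K_{F_0}^2$, and with $a\geq 34$ and $K_{F_0}^2\geq 9$ this yields $L^2>8$. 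Without some such comparison of $\pi^*K_X|_F$ with $\sigma^*K_{F_0}$, nothing prevents $t$ from being arbitrarily small, and your Hodge-index cross term $3t$ gives nothing.

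A second, independent problem is the step $\roundup{L}\cdot(K_F+\roundup{L})\geq L\cdot(K_F+L)$. Writing $\roundup{L}=L+E$ with $E\geq 0$ supported on the fractional part, the difference equals $2L\cdot E+E\cdot(K_F+E)$; the first term is nonnegative by nefness of $L$, but $E\cdot(K_F+E)$ can be negative (for $E=c\Gamma$ with $\Gamma$ a $(-1)$-curve it is $-c-c^2$, for a $(-2)$-curve it is $-2c^2$), and there is no control on how negative it gets when the fractional part has many components. The paper avoids both issues: having secured $L^2>8$, it either applies the birationality criterion of Ma\c{s}ek/Chen when every curve $C$ through a very general point has $(L\cdot C)\geq 4$, or else isolates a curve $C$ with $(L\cdot C)<4$, writes $L\sim_{\bQ}(1+\epsilon)C+T$, and uses Kawamata--Viehweg vanishing to surject onto $H^0(C,\cdot)$, which has dimension $\geq 2$ by Riemann--Roch on the genus $\geq 2$ curve $C$. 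To salvage your route you would need to import the key lemma from \cite{Noether} and repair the round-up comparison.
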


\begin{proof}
Denote $$L:=\pi^*K_X|_F-\frac{1}{a}Z|_F\equiv (1-\frac{1}{a})\pi^*K_X|_F,$$
which is a nef and big $\mathbb{Q}$-divisor on $F$.
By \cite[Corollary~2.3]{Noether},
$\pi^*K_X|_F-\frac{a}{a+1}\sigma^*K_{F_0}$ is $\mathbb{Q}$-effective.
Since $a\geq 34$ and $K_{F_0}^2\geq 9$, we have
$$
L^2=(1-\frac{1}{a})^2(\pi^*K_X|_F)^2\geq (1-\frac{1}{a})^2(\frac{a}{a+1})^2K_{F_0}^2>8.$$

If for any irreducible curve $C$ passing through a very general point $P$, we have $(L\cdot C)\geq 4$, then $K_F+\roundup{L}$ defines a biratinoal map by \cite[Proposition~4]{Mas99} or \cite[Lemma~2.5]{Chen14}.
In particular,
$$h^0(F, K_F+\roundup{\pi^*K_X|_F-\frac{1}{a}Z|_F})\geq 2.$$

Now suppose that there exists an irreducible curve $C$ passing through a very general point $P$ such that $(L\cdot C)< 4$. Since $P$ is very general, $C$ is a nef curve and its geometric genus is at least $2$. Moreover, after taking a higher model of $W$, we may assume that $C$ is smooth (note that this does not change $L^2$ and $(L\cdot C)$ by the projection formula). Then, by \cite[Theorem~2.2.15]{Positivity1}, there exists a sufficiently small positive number $\epsilon$ such that $L-(1+\epsilon)C$ is big. In particular, we may write 
$$L\sim_{\mathbb{Q}}(1+\epsilon)C+T$$ where $T$ is an effective $\mathbb{Q}$-divisor.
Then, by the Kawamata--Viehweg vanishing theorem (\cite{KV,VV}),
$$
H^1(F, K_F+\roundup{L-C-\frac{1}{1+\epsilon}T})=0,
$$
which implies that the natural map
$$
H^0(F, K_F+\roundup{L-\frac{1}{1+\epsilon}T})\to H^0(C, K_C+\roundup{L-\frac{1}{1+\epsilon}T-C}|_C)
$$
is surjective.
Since $\deg(\roundup{L-\frac{1}{1+\epsilon}T-C}|_C)>0$, it is clear that
$$h^0(C, K_C+\roundup{L-\frac{1}{1+\epsilon}T-C}|_C)\geq 2$$ by the Riemann--Roch formula. Therefore, $
H^0(F, K_F+\roundup{L})\geq 2$.
\end{proof}

Recall the following result from \cite{Noether, Noether_Add} connecting the geometry of $|K_X|$ with $\glct(F_0)$.
\begin{prop}[{cf. \cite[Corollary~3.3]{Noether}, \cite[Corollary~3]{Noether_Add}}]\label{prop:Mov free}
Keep the setting in Notation~\ref{notation1}.
If 
$p_g(X)\geq \frac{1}{\glct(F_0)}+1$, then there exists a minimal  $3$-fold $Y$, being birational to $X$, such that $\text{\rm Mov}|K_Y|$ is basepoint-free.
\end{prop}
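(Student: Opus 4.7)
The plan is to follow the glct strategy developed in \cite{Noether, Noether_Add}. I would first reformulate the hypothesis as $a \geq p_g(X) - 1 \geq 1/\glct(F_0)$, equivalently $1/a \leq \glct(F_0)$. In the setup of Notation~\ref{notation1}, the movable part $M$ of $|\rounddown{\pi^*K_X}|$ is already basepoint-free on $W$ with $M \equiv aF$, so the task reduces to finding a minimal model $Y$ birational to $X$ on which the pushforward of $M$ survives as the movable part of $|K_Y|$ and remains basepoint-free—equivalently, no horizontal (dominating-$\Gamma$) component of the fixed divisor $Z$ persists as a divisor on $Y$.

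The key step will be to analyze $Z|_F$ on a general fiber $F$. By \eqref{eq:K|F=Z|F} together with \cite[Corollary~2.3]{Noether}, $Z|_F \sim \pi^*K_X|_F$ while $\pi^*K_X|_F - \tfrac{a}{a+1}\sigma^*K_{F_0}$ is $\mathbb{Q}$-effective; pushing forward via $\sigma$ shows that the class of $\sigma_*(Z|_F)$ exceeds $\tfrac{a}{a+1}K_{F_0}$ by an effective $\mathbb{Q}$-divisor. Combined with the glct bound $\glct(F_0)\geq 1/a$, I expect this to control the singularities of the relevant pair on $F_0$ and to rule out any horizontal component of $Z$ appearing with large multiplicity.

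With this restriction to general fibers understood, I would then run a $K$-MMP on a smooth higher model of $X$, contracting the $\pi$-exceptional divisors together with the horizontal fixed components excluded above, and arrive at a minimal model $Y$ on which $\Mov|K_Y|$ is the pushforward of $M$ and is basepoint-free. The hard part will be the MMP step itself—ensuring termination and checking that the sequence of flops and divisorial contractions does not reintroduce base points into the movable part—which is precisely what the analyses of \cite[Corollary~3.3]{Noether} and \cite[Corollary~3]{Noether_Add} accomplish. Accordingly, the proposal is to invoke those results directly once the reformulation of the hypothesis and the fiberwise log canonicity on $F_0$ are in place.
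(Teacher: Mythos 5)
Your proposal is correct and matches the paper's approach: the paper's entire proof is a one-line citation of \cite[Proposition~2]{Noether_Add} and \cite[Lemma~3.2]{Noether}, and your proposal likewise defers the substance (the MMP and the control of base points via $\glct(F_0)\geq 1/a$) to those external results. The intermediate fiberwise analysis you sketch is a reasonable reconstruction of what those citations contain, but it is not needed in the paper, which simply invokes them directly.
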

\begin{proof}
This is a direct consequence of \cite[Proposition~2]{Noether_Add} and \cite[Lemma~3.2]{Noether}.
\end{proof}

\begin{prop}\label{prop:pg>1/glct+1}
Keep the setting in Notation~\ref{notation1}.
 If $$p_g(X)> \frac{1}{\glct(F_0)}+1,$$ then $$h^0(F, K_F+\roundup{\pi^*K_X|_F-\frac{1}{a}Z|_F})\geq 2.$$
\end{prop}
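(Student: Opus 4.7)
The plan is to first reduce to the basepoint-free case via Proposition~\ref{prop:Mov free} and then use the klt property coming from glct to express $K_F+\roundup{L}$ as $\sigma^{*}(2K_{F_0})$ plus an effective $\sigma$-exceptional divisor, whereupon Riemann--Roch on $F_0$ finishes the argument.

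Since $p_g(X)>\frac{1}{\glct(F_0)}+1$, Proposition~\ref{prop:Mov free} lets me replace $X$ by a birational minimal $3$-fold on which $\Mov|K_X|$ is basepoint-free, which preserves $p_g$ and $F_0$. After this reduction, the Iitaka map associated with $|K_X|$ is a morphism $f_X\colon X\to\Gamma$, and a general fiber $F_X$ avoids the isolated terminal singularities of $X$. Writing $K_X\sim M_X+Z_X$ with $M_X=f_X^{*}\caL$ basepoint-free, adjunction together with $M_X|_{F_X}=0$ yields $K_{F_X}\sim Z_X|_{F_X}$; since $K_X$ is nef, $F_X$ is already minimal, so $F_X=F_0$. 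Taking $M=\pi^{*}M_X$ and $Z=\pi^{*}Z_X$ in Notation~\ref{notation1}, one has $\pi^{*}K_X|_F=\sigma^{*}K_{F_0}$ and $Z|_F=\sigma^{*}Z_0$ as divisors on $F$, where $\sigma=\pi|_F\colon F\to F_0$ and $Z_0:=Z_X|_{F_X}\sim K_{F_0}$ is an effective integral divisor on $F_0$.

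Because $a\ge p_g(X)-1>\frac{1}{\glct(F_0)}$, the pair $(F_0,\frac{1}{a}Z_0)$ is klt. Writing $K_F=\sigma^{*}K_{F_0}+R$ with $R=\sum r_iE_i$ effective and $\sigma$-exceptional, and setting $m_i:=\operatorname{mult}_{E_i}(\sigma^{*}Z_0)$, klt yields two things: every coefficient of $Z_0$ on its prime components is less than $a$, and on each $\sigma$-exceptional $E_i$ the log-discrepancy inequality $1+r_i-\frac{m_i}{a}>0$ gives $\rounddown{\tfrac{m_i}{a}}\le r_i$. Since $\sigma^{*}K_{F_0}$ is integral,
\[
\roundup{L}=\roundup{\sigma^{*}K_{F_0}-\tfrac{1}{a}\sigma^{*}Z_0}=\sigma^{*}K_{F_0}-\rounddown{\tfrac{1}{a}\sigma^{*}Z_0},
\]
so $K_F+\roundup{L}=\sigma^{*}(2K_{F_0})+E'$ with $E':=R-\rounddown{\frac{1}{a}\sigma^{*}Z_0}$. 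The first klt consequence shows that $\rounddown{\frac{1}{a}\sigma^{*}Z_0}$ carries no non-exceptional component (so $E'$ is $\sigma$-exceptional), and the second forces the coefficient of $E'$ on each $E_i$ to be nonnegative; thus $E'$ is an effective $\sigma$-exceptional divisor.

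By the projection formula and $\sigma_{*}\OO_F(E')=\OO_{F_0}$, valid for any effective $\sigma$-exceptional divisor on a blow-up between smooth surfaces, one concludes
\[
\sigma_{*}\OO_F(K_F+\roundup{L})=\OO_{F_0}(2K_{F_0}),
\]
so $h^0(F,K_F+\roundup{L})=h^0(F_0,2K_{F_0})$. Riemann--Roch together with Kodaira vanishing yields $h^0(F_0,2K_{F_0})=\chi(\OO_{F_0})+K_{F_0}^{2}\ge 2$, using the classical lower bounds $\chi(\OO_{F_0})\ge 1$ and $K_{F_0}^{2}\ge 1$ for a minimal surface of general type. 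The main obstacle is verifying carefully that, after the reduction, $\pi^{*}K_X|_F$ and $Z|_F$ are genuinely equal (not merely linearly equivalent) to $\sigma^{*}K_{F_0}$ and $\sigma^{*}Z_0$ on $F$, and translating the klt condition into the coefficientwise nonnegativity that makes $E'$ effective; once these points are in place, the pushforward gives the conclusion at once.
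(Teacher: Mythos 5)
Your proposal is correct and follows essentially the same route as the paper: reduce to the case where $\Mov|K_X|$ is basepoint-free via Proposition~\ref{prop:Mov free}, use $a>\frac{1}{\glct(F_0)}$ to make $(F_0,\frac{1}{a}\sigma_*(Z|_F))$ klt, deduce $\sigma_*\OO_F(K_F+\roundup{L})=\OO_{F_0}(2K_{F_0})$, and conclude by Riemann--Roch. You merely spell out the discrepancy computation behind the pushforward identity that the paper leaves implicit.
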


\begin{proof}
By Proposition~\ref{prop:Mov free}, after possibly replacing $X$ by another minimal model, we may assume that $\text{\rm Mov}|K_X|$ is basepoint-free. Then by construction, $X\to \Sigma$ is a morphism 
and we have the following commutative diagram:
 $$\xymatrix@=4.5em{ W\ar[d]_\pi \ar[r]^f& \Gamma\ar[d]^s\\ X \ar[ur]^{f_0}\ar[r]^{\Phi_{|K_X|}} & \Sigma} $$
In this case, a general fiber of $f_0$ is a minimal surface of general type. In other words, for a general fiber $F$ of $f$, its image on $X$ is just the minimal model $F_0$ of $F$ and $\pi|_F$ is just $\sigma$. In particular, $\pi^*K_X|_F\sim \sigma^*K_{F_0}$. Combining with \eqref{eq:K|F=Z|F}, we have $Z|_F\sim \sigma^*K_{F_0}$. Denote $B:=\sigma_*(Z|_F)$, then $B$ is an effective divisor such that $B\sim K_{F_0}$ and $\sigma^*B=Z|_F$. 

As $a>\frac{1}{\glct(F_0)}$, $(F_0, \frac{1}{a}B)$ is klt. This implies that
$$
\sigma_*\mathcal{O}_F(K_F+\roundup{-\sigma^*K_{F_0}-\frac{1}{a}Z|_F})=\mathcal{O}_{F_0}.
$$
Tensoring with $\mathcal{O}_{F_0}(2K_{F_0})$ and by the projection formula, we have
$$
\sigma_*\mathcal{O}_F(K_F+\roundup{\sigma^*K_{F_0}-\frac{1}{a}Z|_F})=\mathcal{O}_{F_0}(2K_{F_0}).
$$
Here we learnt this trick from \cite[Proof~of~Lemma~3.14]{HZ}.

So
\begin{align*}
 H^0(F, K_F+\roundup{\pi^*K_X|_F-\frac{1}{a}Z|_F})\cong H^0(F_0, 2K_{F_0}).
\end{align*}
Finally, by the Riemann--Roch formula, we have
$$h^0(F_0, 2K_{F_0})=K_{F_0}^2+\chi(\mathcal{O}_{F_0})\geq 2.$$
\end{proof}

\begin{proof}[Proof of Theorem~\ref{thm1}] We may replace $V$ by its minimal model $X$. Assume, to the contrary, that $|2K_X|$ is composed of a pencil of surfaces. We will deduce a contradiction.

In this case, $|K_X|$ is also composed of a pencil of surfaces. So $X$ satisfies the assumptions in Notation~\ref{notation1}.

If $p_g(F)\geq 2$, then \eqref{eq: K+ru>=2} holds automatically as $$\pi^*K_X|_F-\frac{1}{a}Z|_F\sim (1-\frac{1}{a})Z|_F$$ is effective.

If $K_{F_0}^2\geq 9$, then \eqref{eq: K+ru>=2} holds by Proposition~\ref{prop:K^2>=9}.

If $p_g(F)\leq 1$ and $K_{F_0}^2\leq 8$, then $\glct(F_0)\geq \frac{1}{200}$ by Proposition~\ref{prop:glct>1/200}, and hence \eqref{eq: K+ru>=2} holds by Proposition~\ref{prop:pg>1/glct+1}.

In summary, \eqref{eq: K+ru>=2} always holds and hence we conclude the theorem by Lemma~\ref{lem1}.\end{proof}
\section*{\bf Acknowledgments}
The authors appreciate effective discussions with Zhi Jiang during the preparation of this paper.


\begin{thebibliography}{99}














\bibitem{Noether} J.~A.~Chen, M.~Chen, C.~Jiang, {\em The Noether inequality for algebraic $3$-folds}, Duke Math. J. {\bf 169}(2020), no.9, 1603--1645.


\bibitem{Noether_Add} J.~A.~Chen, M.~Chen, C.~Jiang, {\em Addendum to ``The Noether inequality for algebraic $3$-folds''}, Duke Math. J. {\bf 169} (2020), no. 11, 2199--2204.



\bibitem{CAM98} M. Chen, {\em A theorem on pluricanonical maps of nonsingular minimal threefolds of general type}. Chinese Ann. Math. Ser. B {\bf 19} (1998), no. 4, 415--420.

\bibitem{OSAKA} M. Chen, {\em On pluricanonical maps for threefolds of general type. II}. Osaka J. Math. {\bf 38} (2001), no. 2, 451--468.

\bibitem{Chen14} M. Chen, {\em Some birationality criteria on $3$-folds with $p_g>1$}, Sci. China Math. {\bf 57} (2014), no. 11, 2215--2234.


\bibitem{CV05} M.~Chen, E.~Viehweg, {\it Bicanonical and adjoint linear systems on surfaces of general type}, Pacific J. Math. {\bf 219} (2005), no. 1, 83--95.

\bibitem{CZ08} M.~Chen, D.-Q.~Zhang, {\em Characterization of the 4-canonical birationality of algebraic threefolds}. Math. Z. {\bf 258} (2008), no. 3, 565--585.


















\bibitem{HZ} Y.~Hu, T.~Zhang, {\it Algebraic threefolds of general type with small volume}, arXiv:2204.02222.
















\bibitem{KV} Y. Kawamata, {\em A generalization of Kodaira-Ramanujam's vanishing theorem}. Math. Ann. {\bf 261} (1982), no. 1, 43--46.

\bibitem{KMM} Y. Kawamata, K. Matsuda, K. Matsuki, {\em Introduction
to the minimal model problem}, Algebraic geometry, Sendai, 1985, 283--360,
Adv. Stud. Pure Math., 10, North-Holland, Amsterdam, 1987.



\bibitem{K-M} J.~Koll{\'a}r, S.~Mori, {\em Birational geometry of algebraic varieties}, Cambridge Tracts in Mathematics {\bf 134}, Cambridge Univ. Press, 1998.


\bibitem{Positivity1} R.~Lazarsfeld, \emph{Positivity in algebraic geometry, I, Classical setting: line bundles and linear series}, Results in Mathematics and Related Areas. 3rd Series. A Series of Modern Surveys in Mathematics, 48. Springer-Verlag, Berlin, 2004.

\bibitem{Positivity2} R.~Lazarsfeld, \emph{Positivity in algebraic geometry, II, Positivity for vector bundles, and multiplier ideals}, Results in Mathematics and Related Areas. 3rd Series. A Series of Modern Surveys in Mathematics, 49. Springer-Verlag, Berlin, 2004. 


\bibitem{Mas99} V. Ma\c{s}ek, {\it Very ampleness of adjoint linear systems on smooth surfaces with boundary}, Nagoya Math. J. {\bf 153} (1999), 1--29.


\bibitem{VV} E. Viehweg, {\em Vanishing theorems}. J. Reine Angew. Math. {\bf 335} (1982), 1--8.





\end{thebibliography}
\end{document}